\renewcommand{\mathbf}{\mathbold}
\numberwithin{equation}{section}
\newtheorem{Theorem}[equation]{Theorem}
\newtheorem{Proposition}[equation]{Proposition}
\newtheorem{Lemma}[equation]{Lemma}
\newtheorem{Corollary}[equation]{Corollary}
\theoremstyle{definition}
\newtheorem{Remark}[equation]{Remark}
\newtheorem{eg}[equation]{Example}
\newtheorem{Definition}[equation]{Definition}
\newcommand{\cA}{\mathcal{A}}
\newcommand{\cB}{\mathcal{B}}
\newcommand{\cN}{\mathcal{N}}
\newcommand{\cT}{\mathcal{T}}
\newcommand{\cW}{\mathcal{W}}
\newcommand{\cX}{\mathcal{X}}
\newcommand{\CC}{\mathbb{C}}
\newcommand{\PP}{\mathbb{P}}
\newcommand{\RR}{\mathbb{R}}
\newcommand{\XX}{\mathbb{X}}
\newcommand{\ZZ}{\mathbb{Z}}
\renewcommand{\phi}{\varphi}
\renewcommand{\emptyset}{\varnothing}
\renewcommand{\tilde}[1]{\widetilde{#1}}
\def\Ddots{\mathinner{\mkern1mu\raise\p@
\vbox{\kern7\p@\hbox{.}}\mkern2mu
\raise4\p@\hbox{.}\mkern2mu\raise7\p@\hbox{.}\mkern1mu}}
\DeclareMathOperator{\reg}{reg}
\newcommand{\Gr}{\mathrm{Gr}}
\newcommand{\suchthat}{\mid}
\newcommand{\terminology}[1]{{\bf #1}}
\newcommand{\Sch}{\mathrm{Sch}}
\newcommand{\Alg}{\mathrm{Alg}}
\newcommand{\Set}{\mathrm{Set}}
\newcommand{\Func}{\mathrm{Func}}
\newcommand{\thick}{\mathrm{thick}}
\newcommand{\kk}{\Bbbk}
\newcommand{\X}{\cX}
\newcommand{\W}{\cW}
\begin{document}

\title{Symplectic leaves for generalized affine Grassmannian slices}
\author{Dinakar Muthiah}
\address{Kavli Institute for the Physics and Mathematics of the Universe (WPI), The University of Tokyo Institutes for Advanced Study, The University of Tokyo, Kashiwa, Chiba 277-8583, Japan}
\email{dinakar.muthiah@ipmu.jp}
\author{Alex Weekes}
\address{A.~Weekes: Perimeter Institute for Theoretical Physics, Canada}
\email{aweekes@perimeterinstitute.ca}

\maketitle

\begin{abstract}
The generalized affine Grassmannian slices $\overline{\W}_\mu^\lambda$ are algebraic varieties introduced by Braverman, Finkelberg, and Nakajima in their study of Coulomb branches of $3d$ $\mathcal{N}=4$ quiver gauge theories.  We prove a conjecture of theirs by showing that the dense open subset $\W_\mu^\lambda \subseteq \overline{\W}_\mu^\lambda$ is smooth. An explicit decomposition of $\overline{\W}_\mu^\lambda$ into symplectic leaves follows as a corollary. Our argument works over an arbitrary ring and in particular implies that the complex points $\W_\mu^\lambda(\CC)$ are a smooth holomorphic symplectic manifold.
\end{abstract}

\section{Introduction}

Affine Grassmannian slices for a reductive group $G$ are objects of considerable interest. As transversal slices to spherical Schubert varieties, they capture information about singularities in the affine Grassmannian. By the geometric Satake correspondence, these singularities are known to carry representation theoretic information about the Langlands dual group of $G$. Additionally, they have a Poisson structure that quantizes to the truncated shifted Yangians \cite{Kamnitzer-Webster-Weekes-Yacobi}. Furthermore, they form a large class of conical symplectic singularities that do not admit symplectic resolutions in general. As such, they form an important test ground for ideas in symplectic algebraic geometry and representation theory.

Recently, Braverman, Finkelberg, and Nakajima \cite{Braverman-Finkelberg-Nakajima} showed that affine Grassmannian slices arise as Coulomb branches of $3d$ $\cN=4$ quiver gauge theories. Their construction of affine Grassmannian slices is particularly satisfying because: (1) the quantization comes essentially for free in their construction, (2) their construction works for arbitrary symmetric Kac-Moody type. Because of point (2), the Coulomb branch perspective seems to be a fruitful path toward understanding the geometric Satake correspondence in affine type and beyond (see e.g. \cite{Finkelberg,Nakajima}).

However, their construction produces more than just affine Grassmannian slices: usual affine Grassmannian slices are indexed by a pair of dominant coweights $\lambda$ and $\mu$, but their construction does not constrain $\mu$ to be dominant. Rather, they construct \terminology{generalized affine Grassmannian slices} denoted $\overline{\W}^\lambda_\mu$ for $\lambda$ constrained to be dominant but for \emph{arbitrary} $\mu \leq \lambda$.

The geometry of the generalized affine Grassmannian slices for $\mu$ non-dominant is less understood than the case of $\mu$ dominant. For example, there is a disjoint decomposition 
\begin{align}
  \label{eq:16}
 \overline{\W}^\lambda_\mu = \bigsqcup_{\substack{ \nu \text{ dominant}, \\ \mu \leq \nu \leq \lambda} } \W^\nu_\mu
\end{align}
that Braverman, Finkelberg and Nakajima conjecture (\cite[Remark 3.19]{Braverman-Finkelberg-Nakajima}) to be a decomposition of $\overline{\W}^\lambda_\mu$ into symplectic leaves. They show that this would follow if one could show that the subvarieties $\W^\lambda_\mu$ are smooth for all $\lambda$ and $\mu$. In this note, we show the following, which proves their conjecture.
\begin{Theorem}[Corollary \ref{cor:main-result} in the main text]
  \label{thm:main-result-in-intro}
 For any $\lambda \geq \mu$ with $\lambda$ dominant, the variety $\W^\lambda_\mu$ is smooth.
\end{Theorem}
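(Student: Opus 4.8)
The plan is to establish smoothness by verifying the infinitesimal lifting criterion for formal smoothness over the base ring $\kk$; this is the formulation that makes the characteristic-free, ``arbitrary ring'' statement natural, since the only thing one ever checks is that solutions lift along square-zero extensions. So let $A \twoheadrightarrow A_0 = A/I$ be a square-zero extension of $\kk$-algebras and let $x_0 \in \W^\lambda_\mu(A_0)$; I must produce a lift $x \in \W^\lambda_\mu(A)$. Unwinding the definition of the generalized slice, the datum $x_0$ is a loop-group representative, equivalently a $G$-bundle on $\mathbb{P}^1_{A_0}$ together with a trivialization away from $z=0$, a modification at $z=0$ of type exactly $\lambda$, and the behaviour at $z=\infty$ recorded by $\mu$. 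The first step is to rewrite ``lift $x_0$'' purely in terms of lifting this geometric data.

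Once phrased this way, standard deformation theory identifies the obstruction to lifting with a class in a first cohomology group $H^1$ of a complex on $\mathbb{P}^1_{A_0}$ built from the adjoint bundle twisted by the divisors that encode $\lambda$ and $\mu$, and identifies the space of lifts (when the obstruction vanishes) with the corresponding $H^0$. The second step is therefore to prove that this $H^1$ vanishes. Here I would use that $\lambda$ is dominant and that we are on the \emph{open} stratum: the modification type being exactly $\lambda$, rather than some smaller $\nu$, is precisely the genericity that should make the twisted adjoint bundle have no higher cohomology, so that lifting is unobstructed and $\W^\lambda_\mu$ is smooth of the expected dimension $\langle 2\rho, \lambda - \mu\rangle$. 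A concrete avatar of the same computation is to choose a triangular (Gauss-type) factorization of the loop-group representative on a big cell, read off affine coordinates from its matrix entries, and observe that the condition of lying in the open stratum becomes the invertibility of finitely many leading coefficients; this exhibits a chart of $\W^\lambda_\mu$ as an open subscheme of affine space over $\kk$, and the two viewpoints can be used to cross-check the obstruction computation.

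The main obstacle is the behaviour at $z=\infty$ for non-dominant $\mu$. When $\mu$ is dominant the result is essentially classical: $\W^\lambda_\mu$ is a transversal slice to the smooth $G[[z]]$-orbit $\Gr^\lambda$ inside the ordinary affine Grassmannian, and transversality gives smoothness at once. For general $\mu$ the slice no longer sits inside $\Gr$, so one must genuinely control the generalized definition, and it is exactly the twist at $\infty$ that can a priori contribute to the obstruction $H^1$. I would try to neutralize this either by a direct positivity estimate for the twisted adjoint bundle, or by reducing to the dominant case: writing $\mu = \nu + \mu'$ and using the factorization structure of the slices to present $\W^\lambda_\mu$ locally as a product of a dominant slice with an explicit smooth factor, closely related to an open zastava space whose smoothness is known. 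The remaining subtleties, both of which must be handled with care precisely because we work over an arbitrary ring, are (i) showing the open-stratum conditions cut out a reduced, smooth locus with no embedded or characteristic-dependent pathology, and (ii) checking that every cohomology vanishing and every coordinate identity is free of denominators, so that the argument descends to $\Spec \ZZ$ and hence to any base.
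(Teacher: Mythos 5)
You adopt the same outer framework as the paper---lifting along square-zero extensions over an arbitrary $\kk$, then converting formal smoothness into smoothness via finite presentation---but the heart of your argument, the vanishing of the obstruction group $H^1$, is asserted rather than proved, and this is precisely where the difficulty of the theorem lives. Recasting an $A_0$-point of $\W^\lambda_\mu$ as a $G$-bundle on $\PP^1_{A_0}$ with a modification of type $\lambda$ at $0$ and behaviour $\mu$ at $\infty$ is legitimate (this is the moduli description in Braverman--Finkelberg--Nakajima), but for non-dominant $\mu$ the datum at $\infty$ is a Gauss-type condition (simultaneous transversal $B$- and $B^-$-structures, reflected in the factorization $U_1^+[[z^{-1}]]\, z^\mu T_1[[z^{-1}]]\, U_1^-[[z^{-1}]]$), so the deformation complex is not simply a twisted adjoint bundle on $\PP^1$ whose higher cohomology dies for positivity reasons; the claim that lying on the open stratum ``should'' kill $H^1$ is exactly the statement to be proved. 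The cohomology-vanishing argument is only known in the literature for $\lambda = 0$ (Finkelberg--Mirkovi\'c, via global generation of the tangent bundle of the flag variety), and the paper explicitly notes that even relating \emph{that} case to a group-theoretic proof is open. Your fallback---writing $\mu = \nu + \mu'$ and presenting $\W^\lambda_\mu$ locally as a product of a dominant slice with a Zastava-like smooth factor---is likewise unsubstantiated: the multiplication maps between slices are not local isomorphisms onto the open stratum, and no such local product decomposition is known in general. Finally, your ``concrete avatar'' cross-check is wrong as stated: the boundary $\overline{\W}^\lambda_\mu \setminus \W^\lambda_\mu$ has codimension at least two, so the open stratum cannot be the locus where finitely many leading coefficients are simultaneously invertible (the complement of such a locus has pure codimension one), and $\W^\lambda_\mu$ is not Zariski-locally an open subscheme of affine space in general (that holds for $\lambda$ minuscule, by Krylov--Perunov, but not beyond).

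For contrast, the paper's proof never computes deformation cohomology and never needs positivity at $\infty$. It enlarges the slice to $\X^\lambda_\mu = \X^\lambda \cap \X_\mu$, which factors as $U^+[z] \times \W^\lambda_\mu \times U^-[z]$; it lifts a point of $\X^\lambda_\mu(A)$ through the formally smooth ind-scheme $\X^\lambda$ (a Zariski-locally-trivial $G[z]$-bundle over the smooth orbit $\Gr^\lambda$, with $G[z]$ formally smooth because $\tilde{A}[z] \twoheadrightarrow A[z]$ is again square-zero); and it then repairs the lift, which may fail the $\X_\mu$-condition, by multiplying by a torus-valued polynomial $t \in T(\tilde{A}[z])$. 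That $t$ exists is the paper's one genuinely new computation: a square-zero Weierstra\ss\ preparation lemma produces, for each dominant weight $\Lambda$, a \emph{unique} $\gamma_\Lambda \in 1 + I[z]$ normalizing $\Delta_\Lambda(g')$, and uniqueness forces $\gamma_{\Lambda+\Lambda'} = \gamma_\Lambda \gamma_{\Lambda'}$, so the $\gamma_\Lambda$ assemble into a point of $T(\tilde{A}[z])$ via the semigroup description of the torus. If you want to salvage your plan, the missing ingredient is an actual proof of unobstructedness for the correct complex encoding the condition at $\infty$ when $\mu$ is non-dominant---a problem the paper flags as open even in the Zastava case---so as written your proposal has a genuine gap at its central step.
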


In particular, it follows that the set of $\CC$--points $\W_\mu^\lambda(\CC)$ is a smooth holomorphic symplectic manifold.  This verifies part of an expectation that it is also hyper-K\"ahler, since $\W_\mu^\lambda(\CC)$ should be identified with a moduli space of singular instantons on $\RR^3$, see \cite{Braverman-Finkelberg-Nakajima}, \cite{Bullimore-Dimofte-Gaiotto}.

\subsection{Previously known cases}

Theorem \ref{thm:main-result-in-intro} is known when $\mu$ is dominant because in this case $\overline{\W}^\lambda_\mu$ is a usual affine Grassmannian slice. It is also known for $\mu \leq w_0(\lambda)$ where $w_0$ is the longest element of the Weyl group \cite[Remark 3.19]{Braverman-Finkelberg-Nakajima}.   In type A, all cases are known by work of Nakajima and Takayama on Cherkis bow varieties \cite[Theorem 7.13]{Nakajima-Takayama}.  In work to appear, Krylov and Perunov prove Theorem \ref{thm:main-result-in-intro} in general type for $\lambda$ {miniscule}.  In fact, they prove more: they show that $\overline{\W}^\lambda_\mu = \W^\lambda_\mu$ is an affine space in this case.

We note that our main theorem has been expected by physicists, since $\W_\mu^\lambda$ should be a space of singular instantons as mentioned above, and that the decomposition (\ref{eq:16}) is an instance of {\em monopole bubbling}.  We refer the reader again to \cite{Bullimore-Dimofte-Gaiotto}, and to the references in the physics literature given in the introduction of \cite{Braverman-Finkelberg-Nakajima}, as well as in \cite{Nakajima-15}.

Finally, generalized affine Grassmannian slices of the form $\W^0_\mu$ had been previously considered in a different guise. These are the so called ``open Zastava spaces'' whose smoothness is deduced by a certain cohomology vanishing computation \cite{Finkelberg-Mirkovic}. Our approach gives a direct group-theoretic proof of this smoothness. It would be interested to understand how these two approaches are precisely related. We elaborate on this briefly in \S \ref{sec:open-zastava}.

\subsection{Our approach}

There is a group theoretic construction of generalized affine Grassmannian $\overline{\W}^\lambda_\mu$ and their open subschemes $\W^\lambda_\mu$ given in \cite[Section 2(xi)]{Braverman-Finkelberg-Nakajima}, inspired by the scattering matrix description of singular monopoles from \cite[Section 6.4.1]{Bullimore-Dimofte-Gaiotto}.   We slightly generalize this group-theoretic construction to produce spaces that we call $\overline{\X}^\lambda_\mu$ and $\X^\lambda_\mu$. We show these spaces are products of the corresponding $\W$-versions and an infinite dimensional affine space (Proposition \ref{prop:x-lambda-mu-product-w-lambda-mu}). We then show that the spaces $\X^\lambda_\mu$ are formally smooth (Theorem \ref{thm:main-theorem}), from which we conclude that the spaces $\W^\lambda_\mu$ are formally smooth. Because the $\overline{\W}^\lambda_\mu$ (and hence $\W^\lambda_\mu$) are known to be finitely presented (see Proposition \ref{prop:finite-presentation-of-w-lambda-mu} for a direct group-theoretic proof), we conclude that $\W^\lambda_\mu$ is in fact smooth. A subtle point in our approach is that we make use of the formal smoothness of an ind-scheme $\X^\lambda$ that is \emph{not} smooth, so the use of infinite-dimensional spaces and formal smoothness appears essential in our approach (see Remark \ref{rem:X-lambda-not-smooth}).

We note that our approach to smoothness is analogous to a standard approach to the smoothness of usual affine Grassmannian slices (and in fact general Schubert slices for partial flag varieties, see e.g.\cite[\S 1.4]{Kazhdan-Lusztig})). Our space $\X^\lambda_\mu$ is constructed using an auxiliary space $\X_\mu$ that plays the role of an open chart in the affine Grassmannian. We explain this briefly in \S \ref{sec:comparison-to-smoothness-for-open-affine-grassmannian-slices}.

\subsection{Acknowledgements}
We thank Michael Finkelberg, Hiraku Nakajima, and Oded Yacobi for helpful comments.
A.W.~is grateful to Kavli IPMU for hosting him during the workshop ``Representation theory, gauge theory, and integrable systems'', where this project was started.
This research was supported in part by Perimeter Institute for Theoretical Physics. Research at Perimeter Institute is supported by the Government of Canada through the Department of Innovation, Science and Economic Development Canada and by the Province of Ontario through the Ministry of Economic Development, Job Creation and Trade.

\section{Preliminaries}

\subsection{Schemes and functors}
Let $\kk$ be a commutative ring. Let $\Alg_\kk$ be the category of commutative $\kk$-algebras, and let $\Sch_\kk$ be the category of $\kk$-schemes. We define the category $\Func_{\kk}$ of $\kk$-functors to be the category of functors $\Alg_\kk \rightarrow \Set$. Recall that there is a fully-faithful embedding $\Sch_\kk \hookrightarrow \Func_\kk$ coming from the Yoneda lemma and the fact that morphisms of schemes are local for the Zariski topology. Following the usual terminology, we will call this inclusion the map sending a scheme to the functor it represents. All the functors we consider will be ind-schemes (of possibly ind-infinite type), so it is not strictly necessary to consider them as functors. However, we will be focused on questions of formal smoothness, so the functorial viewpoint will be essential.

\subsection{Formal smoothness}

Let $\phi: \tilde{A} \rightarrow A$ be a morphism in $\Alg_\kk$. Recall that we say $\phi$ is a \terminology{square-zero extension} if $\phi$ is surjective and the ideal $I = \ker(\phi)$ satisfies $I^2=0$. 

Let $X \in \Func_\kk$.  We say that $X$ is \terminology{formally smooth} if for every square zero extension $\tilde{A} \rightarrow A$, the map $X(\tilde{A}) \rightarrow X(A)$ is surjective. The relevance of formal smoothness is the following theorem of Grothendieck (see e.g. \cite[\href{https://stacks.math.columbia.edu/tag/02H6}{Lemma 02H6}]{stacks-project}).

\begin{Proposition}
\label{prop: smooth and fsmooth}
  Let $X$ be a finitely presented $\kk$-scheme. Then $X$ is smooth if and only if it is formally smooth.
\end{Proposition}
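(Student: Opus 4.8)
The plan is to reduce the statement to a purely ring-theoretic assertion and then prove the two implications using the naive cotangent complex and the infinitesimal lifting criterion. Since $X$ is finitely presented, it is covered by affine opens $\Spec B$ with $B$ a finitely presented $\kk$-algebra, and both smoothness and the lifting property localize on $X$ (the one delicate point being that one must check formal smoothness passes to and from Zariski localizations). It therefore suffices to prove: a finitely presented $\kk$-algebra $B$ is smooth over $\kk$ if and only if, for every square-zero extension $\tilde A \twoheadrightarrow A$ with kernel $I$ and every $\kk$-algebra map $B \to A$, there is a lift $B \to \tilde A$. The central object is the naive cotangent complex attached to a presentation $P = \kk[x_1,\dots,x_n] \twoheadrightarrow B$ with kernel $J$, namely the two-term complex $J/J^2 \xrightarrow{d} \Omega_{P/\kk}\otimes_P B$ placed in degrees $-1,0$, together with the conormal sequence $J/J^2 \xrightarrow{d} \Omega_{P/\kk}\otimes_P B \to \Omega_{B/\kk} \to 0$.

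For the forward direction (smooth $\Rightarrow$ formally smooth), I would argue as follows. Given a lifting problem as above, first lift the composite $P \to B \to A$ to a $\kk$-algebra map $\psi\colon P \to \tilde A$, which is possible since $P$ is free. The failure of $\psi$ to factor through $B = P/J$ is measured by a homomorphism $J/J^2 \to I$, i.e.\ an element of $\hom_B(J/J^2, I)$, and modifying $\psi$ by a $\kk$-derivation $P \to I$ alters this obstruction precisely by its image under the map $\hom_B(\Omega_{P/\kk}\otimes_P B, I) \to \hom_B(J/J^2, I)$ induced by precomposition with $d$. Smoothness means the conormal sequence is split short exact with $\Omega_{B/\kk}$ finite projective, so this map is surjective; hence the obstruction can be cancelled and a lift exists.

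For the converse (formally smooth plus finitely presented $\Rightarrow$ smooth), the key is to feed a universal square-zero extension into the lifting property. Applying formal smoothness to the square-zero extension $P/J^2 \twoheadrightarrow P/J = B$ and the identity map $B \to B$ produces a $\kk$-algebra section $B \to P/J^2$. Such a section is exactly a $B$-linear splitting of the conormal sequence; this shows simultaneously that $d\colon J/J^2 \to \Omega_{P/\kk}\otimes_P B$ is split injective, so that $H^{-1}$ of the naive cotangent complex vanishes, and that $\Omega_{B/\kk}$ is a direct summand of a free module, hence projective. Finite presentation guarantees that $\Omega_{B/\kk}$ is finitely presented as a $B$-module, upgrading \emph{projective} to \emph{finite projective}; the Jacobian criterion then yields smoothness, flatness included.

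I expect the backward direction to be the main obstacle, for two reasons. First, extracting the splitting of the conormal sequence from the abstract lifting property, and verifying that a single section controls the local structure, requires care. Second, the passage from \emph{projective} to \emph{finite projective} genuinely uses the finite-presentation hypothesis --- which is exactly why the equivalence can fail otherwise --- and relating the cotangent-complex condition back to flatness and the classical smoothness criterion is the technical heart of the argument. This is precisely the distinction exploited later in the paper, where formally smooth but non-finitely-presented ind-schemes arise and cannot be treated through smoothness directly.
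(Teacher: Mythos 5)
The paper offers no internal proof to compare against: Proposition \ref{prop: smooth and fsmooth} is recorded as a theorem of Grothendieck with a citation to the Stacks project, so your blind argument should be measured against the standard proof behind that reference --- which is essentially what you have reconstructed, correctly in outline. The affine core is sound: the obstruction calculus (lift $P \to \tilde A$, obstruction in $\hom_B(J/J^2, I)$, modifications by derivations acting through precomposition with $d$) is the standard argument for the forward direction, and in the converse your key observation that a $\kk$-algebra section $B \to P/J^2$ is equivalent to a $B$-linear splitting of the conormal sequence is exactly right: the difference between the projection $P \to P/J^2$ and $\sigma$ composed with $P \to B$ is a derivation $P \to J/J^2$ restricting to the identity on $J/J^2$, which splits $d$. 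Two points deserve sharpening. First, in the globalization the direction carrying real content is smooth $\Rightarrow$ formally smooth: local lifts of a given $A$-point exist on an affine cover, they form a pseudo-torsor under groups of the form $\hom(f_0^*\Omega_{X/\kk}, \tilde I)$, and the obstruction to gluing lies in $H^1$ of a quasi-coherent sheaf on $\Spec A$, which vanishes since $\Spec A$ is affine; the reverse localization is actually immediate in the paper's functorial definition, because a square-zero thickening does not change the underlying topological space, so a lift in $X(\tilde A)$ of an $A$-point of an affine open $\Spec B \subseteq X$ automatically lands in $\Spec B$. Second, your closing appeal to ``the Jacobian criterion, flatness included'' conceals where the theorem sits: if smoothness is defined via the naive cotangent complex (finite presentation, split injectivity of $d$, and $\Omega_{B/\kk}$ finite projective), your splitting already finishes the proof, whereas if it is defined as flat with geometrically regular fibres, one still owes the nontrivial implication from the cotangent condition to flatness --- you correctly flag this as the technical heart, but it remains a cited black box in your write-up just as the whole proposition does in the paper. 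One small simplification: since $B$ is finitely presented you may take $P$ to be a polynomial ring in finitely many variables, so $\Omega_{P/\kk}\otimes_P B$ is finite free and $\Omega_{B/\kk}$, being a direct summand, is finite projective outright; the separate upgrade from projective to finite projective via finite presentation is unnecessary.
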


We record the following lemma for use later.
\begin{Lemma}
  \label{lem:formal-smoothness-and-products}
 Let $X,Y \in \Func_\kk$. Suppose $X \times Y$ is formally smooth and $X(\kk) \neq \emptyset$. Then $Y$ is formally smooth.
\end{Lemma}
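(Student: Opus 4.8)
The plan is to unwind the definition of formal smoothness and to use the fact that products in $\Func_\kk$ are computed objectwise, so that $(X \times Y)(A) = X(A) \times Y(A)$ naturally in $A \in \Alg_\kk$. Concretely, I would fix a square-zero extension $\phi \colon \tilde{A} \rightarrow A$ together with an arbitrary point $y \in Y(A)$, and try to produce a lift $\tilde{y} \in Y(\tilde{A})$ with $Y(\phi)(\tilde{y}) = y$; surjectivity of $Y(\phi)$ for every such $\phi$ is exactly the assertion that $Y$ is formally smooth.

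The key step is to manufacture a companion point in $X(A)$ so that $y$ can be viewed as one coordinate of a point of $(X \times Y)(A)$. This is where the hypothesis $X(\kk) \neq \emptyset$ enters: I would pick $x_0 \in X(\kk)$, and for each $\kk$-algebra $B$ let $x_B \in X(B)$ denote the image of $x_0$ under the map $X(\kk) \rightarrow X(B)$ induced by the structure morphism $\kk \rightarrow B$. Functoriality, applied to the fact that the composite $\kk \rightarrow \tilde{A} \xrightarrow{\phi} A$ is the structure map $\kk \rightarrow A$, gives the compatibility $X(\phi)(x_{\tilde{A}}) = x_A$. Now form the pair $(x_A, y) \in X(A) \times Y(A) = (X \times Y)(A)$. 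Formal smoothness of $X \times Y$ yields a lift $(\tilde{x}, \tilde{y}) \in X(\tilde{A}) \times Y(\tilde{A}) = (X \times Y)(\tilde{A})$ with $(X \times Y)(\phi)(\tilde{x}, \tilde{y}) = (x_A, y)$. Reading off the second coordinate gives $Y(\phi)(\tilde{y}) = y$, so $\tilde{y}$ is the desired lift and $Y$ is formally smooth.

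I do not expect any genuine obstacle here: once products are recognized as objectwise, the argument is a one-line diagram chase. The only point requiring care is the hypothesis $X(\kk) \neq \emptyset$, which is used solely to guarantee (functorially) that $X(A) \neq \emptyset$ for every $A$, and it is genuinely necessary: if $X$ is the empty functor then $X \times Y$ is empty and hence vacuously formally smooth for \emph{any} $Y$, including ones that are not formally smooth.
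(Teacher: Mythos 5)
Your proof is correct, and in fact the paper states this lemma without any proof at all, so your objectwise diagram chase --- pairing $y$ with the image $x_A$ of a $\kk$-point under $X(\kk) \rightarrow X(A)$, lifting the pair through $(X \times Y)(\tilde{A}) \twoheadrightarrow (X \times Y)(A)$, and projecting to the second factor --- is exactly the evident argument the authors intended to leave to the reader. One tiny remark: constructing $x_{\tilde{A}}$ and checking $X(\phi)(x_{\tilde{A}}) = x_A$ is superfluous, since all you need is $X(A) \neq \emptyset$ (the lifted first coordinate $\tilde{x}$ need not be $x_{\tilde{A}}$ anyway), as you yourself observe in your closing paragraph.
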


\subsection{Group theoretic data}
Let $G$ be a connected split reductive group (see e.g.~\cite[Section II.1]{Jantzen} for an overview).  In particular, $G$ is defined over $\ZZ$. 
Let $T$ be a maximal torus, and let $U^+$ and $U^-$ be opposite unipotent subgroups (i.e. $U^-T$ and $U^+T$ are opposite Borel subgroups). Let $P$ be the coweight lattice of $T$, and let $Q$ be the coroot lattice.  We write $P_{++}$ for the dominant cone and write $Q_{+}$ for the positive coroot cone. Recall the dominance order where for $\lambda, \mu \in P$, we write $\mu \leq \lambda$ if $\lambda - \mu \in Q_+$.

Let $P^\vee$ be the weight lattice of $T$, and $P^\vee_{++}$ its cone of dominant weights.   For each $\Lambda \in P^\vee_{++}$, let $V(-\Lambda)$ be the Weyl module of $G$ with lowest weight ${-\Lambda}$.  This is a free $\kk$-module, and the lowest weight space $V(-\Lambda)_{-\Lambda}$ is a rank-one free $\kk$-module. Let $v_{-\Lambda}$ be a generator of this free module, and let $v^*_{-\Lambda}$ be the linear functional on $V(-\Lambda)$ that is equal to one on $v_{-\Lambda}$ and is zero on all other weight spaces. Let $\Delta_{\Lambda}$ be the regular function on $G$ defined by $\Delta_{\Lambda}(g) = \langle v_{-\Lambda}^*, g v_{-\Lambda} \rangle$.

Recall that the big cell of $G$ is the open subscheme $U^+ T U^- \subset G$. It is isomorphic to the product $U^+\times T \times U^-$, via the multiplication map.

\begin{Lemma}  Let $\Lambda, \Lambda' \in P^\vee_{++}$.  For $t\in T$ and $g\in U^+ T U^-$, we have:
  \label{lem:delta-i-and-torus}
  \begin{itemize}
\item[(a)]$\Delta_{\Lambda}(tg)= \Delta_{\Lambda}(g t) =  \Delta_{\Lambda}(t) \Delta_{\Lambda}(g)$,
\item[(b)]$\Delta_{\Lambda+\Lambda' }(g) = \Delta_{\Lambda}(g) \Delta_{\Lambda'}(g)$.
\end{itemize}
\end{Lemma}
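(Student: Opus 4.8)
The plan is to reduce both statements to a single observation: on the big cell, $\Delta_\Lambda$ is nothing but a character of the torus component. Write $\chi_\Lambda\colon T \to \GG_m$ for the character through which $T$ acts on the rank-one lowest weight space $V(-\Lambda)_{-\Lambda}$, so that $t\,v_{-\Lambda} = \chi_\Lambda(t)\,v_{-\Lambda}$; as a character this is just $-\Lambda$, and since the relevant weights of $V(-\Lambda)$, $V(-\Lambda')$, $V(-\Lambda-\Lambda')$ are minus the respective dominant weights, the characters satisfy $\chi_{\Lambda+\Lambda'} = \chi_\Lambda\,\chi_{\Lambda'}$. Two integral properties of the extremal vectors will be used: (i) $v_{-\Lambda}$ is fixed by $U^-$, because $-\Lambda$ is the lowest weight and the lowering operators annihilate it; and (ii) for any $u \in U^+$ the vector $u\,v_{-\Lambda}$ equals $v_{-\Lambda}$ plus a sum of vectors of strictly higher weight, so that $\langle v^*_{-\Lambda}, u\,v_{-\Lambda}\rangle = 1$ since $v^*_{-\Lambda}$ is supported on the lowest weight space.

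For part (a) I would not even invoke the big cell. Because $v^*_{-\Lambda}$ is supported on $V(-\Lambda)_{-\Lambda}$, for any vector $w$ one has $\langle v^*_{-\Lambda}, t\,w\rangle = \chi_\Lambda(t)\,\langle v^*_{-\Lambda}, w\rangle$. Taking $w = g\,v_{-\Lambda}$ yields $\Delta_\Lambda(tg) = \chi_\Lambda(t)\,\Delta_\Lambda(g)$, while $t\,v_{-\Lambda} = \chi_\Lambda(t)\,v_{-\Lambda}$ yields $\Delta_\Lambda(gt) = \chi_\Lambda(t)\,\Delta_\Lambda(g)$; setting $g = 1$ identifies $\Delta_\Lambda(t) = \chi_\Lambda(t)$, which gives (a).

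For part (b) I would factor $g = u^+ t u^-$ in the big cell. Applying (i) gives $u^-\,v_{-\Lambda} = v_{-\Lambda}$, then $t\,v_{-\Lambda} = \chi_\Lambda(t)\,v_{-\Lambda}$, and finally (ii) gives the key identity
\[
\Delta_\Lambda(g) = \langle v^*_{-\Lambda},\, u^+ t u^-\,v_{-\Lambda}\rangle = \chi_\Lambda(t),
\]
showing that on the big cell $\Delta_\Lambda$ depends only on the torus component. Applying this to $\Lambda$, $\Lambda'$ and $\Lambda+\Lambda'$ and using $\chi_{\Lambda+\Lambda'} = \chi_\Lambda\,\chi_{\Lambda'}$ then gives $\Delta_{\Lambda+\Lambda'}(g) = \chi_{\Lambda+\Lambda'}(t) = \chi_\Lambda(t)\,\chi_{\Lambda'}(t) = \Delta_\Lambda(g)\,\Delta_{\Lambda'}(g)$, which is (b).

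The only points needing genuine care are the integral statements (i) and (ii) and their functorial validity over an arbitrary $R \in \Alg_\kk$; I would justify them from the standard theory of Weyl modules over $\ZZ$ (a Chevalley basis with the divided-power action of the root subgroups), under which the lowest weight vector is killed by all positive divided powers of the lowering operators while the raising operators strictly increase weight. Everything else is formal manipulation of the matrix-coefficient pairing and is valid over any base ring, so I expect no substantive obstacle beyond this bookkeeping.
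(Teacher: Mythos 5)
Your proof is correct. The paper actually states this lemma without proof, treating it as standard background; your argument --- that on the big cell $\Delta_\Lambda(u^+tu^-)=\chi_\Lambda(t)$ because $U^-$ fixes $v_{-\Lambda}$ and $U^+$ moves it only into strictly higher weight spaces, combined with $\chi_{\Lambda+\Lambda'}=\chi_\Lambda\chi_{\Lambda'}$ --- is exactly the standard one, and your attention to the integral statements (divided powers acting on the Weyl module over $\ZZ$, hence functorially over any $R\in\Alg_\kk$) is precisely the bookkeeping needed for the paper's setting over an arbitrary base ring $\kk$. Your side observation that part (a) holds for all $g\in G$, with no big-cell hypothesis, is also correct.
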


The coordinate ring of $T$ is the group algebra of $P^\vee$, and thus for any $A \in \Alg_\kk$ there is a natural bijection $T(A) \longleftrightarrow \operatorname{Hom}_{\mathit{groups}}( P^\vee, A^{\times})$ where $A^\times \subset A$ are the units.  Since the Grothendieck group of the semigroup $P^\vee_{++}$ is canonically isomorphic to $P^\vee$, we have:

\begin{Lemma}
\label{lem: torus reconstruction}
For any $A \in \Alg_\kk$ there is a natural bijection
$$
T(A) \longleftrightarrow \operatorname{Hom}_{\textit{semigroups}}( P^\vee_{++}, A^{\times} ),
$$
such that $t \in T(A)$ corresponds to the homomorphism $\Lambda \mapsto \Delta_\Lambda(t)$.
\end{Lemma}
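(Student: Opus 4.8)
The plan is to produce the asserted bijection by composing the two facts already assembled in the paragraph preceding the statement: the group-algebra description of $T$-points, and the identification of $P^\vee$ as the Grothendieck group of $P^\vee_{++}$. Explicitly, for each $A \in \Alg_\kk$ I would define
\[
\Phi_A \colon T(A) \longrightarrow \operatorname{Hom}_{\mathrm{semigroups}}(P^\vee_{++}, A^\times), \qquad \Phi_A(t)(\Lambda) = \Delta_\Lambda(t),
\]
and verify that $\Phi_A$ is a well-defined bijection, natural in $A$.

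First I would check well-definedness. Evaluating $\Delta_\Lambda(g) = \langle v_{-\Lambda}^*, g\, v_{-\Lambda}\rangle$ at $t \in T(A)$ and using that $v_{-\Lambda}$ spans the weight space $V(-\Lambda)_{-\Lambda}$, on which $T$ acts through the character $-\Lambda$, gives $\Delta_\Lambda(t) = (-\Lambda)(t)$; since $-\Lambda$ is a character of $T$ this lies in $A^\times$. That $\Lambda \mapsto \Delta_\Lambda(t)$ is a semigroup homomorphism is exactly Lemma \ref{lem:delta-i-and-torus}(b), and naturality in $A$ is immediate because $\Delta_\Lambda$ is a fixed regular function evaluated functorially. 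For bijectivity I would factor $\Phi_A$ through the identifications in hand. The structure $\kk[T] = \kk[P^\vee]$ gives the natural bijection $T(A) \cong \operatorname{Hom}_{\mathrm{groups}}(P^\vee, A^\times)$, under which $t$ corresponds to the character-evaluation homomorphism $\chi \mapsto \chi(t)$; the computation above shows $\Phi_A$ is the composite of this with the restriction map along $P^\vee_{++} \to P^\vee$ (up to the automorphism $\Lambda \mapsto -\Lambda$, which is harmless). Since $A^\times$ is a group and $P^\vee$ is the group completion of $P^\vee_{++}$, the universal property of group completion makes restriction
\[
\operatorname{Hom}_{\mathrm{groups}}(P^\vee, A^\times) \longrightarrow \operatorname{Hom}_{\mathrm{semigroups}}(P^\vee_{++}, A^\times)
\]
a bijection: every semigroup homomorphism into a group extends uniquely over the completion. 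Composing the two bijections yields that $\Phi_A$ is bijective and natural.

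The only genuinely load-bearing step is this last one—passing from semigroup homomorphisms on $P^\vee_{++}$ to group homomorphisms on $P^\vee$—and it rests entirely on $A^\times$ being a group together with the stated identification of $P^\vee$ with the Grothendieck group of $P^\vee_{++}$; concretely this is the fact that $P^\vee_{++}$ is cancellative and every element of $P^\vee$ is a difference of two dominant weights, so the extension exists and is unique. Everything else is routine bookkeeping: tracking the sign coming from the lowest-weight normalization of $\Delta_\Lambda$, and checking naturality in $A$, neither of which presents an obstacle.
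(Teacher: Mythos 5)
Your proposal is correct and is essentially the paper's own argument: the paper states the lemma as an immediate consequence of the two facts it records just before it, namely that $\kk[T]=\kk[P^\vee]$ gives $T(A)\cong\operatorname{Hom}_{\mathit{groups}}(P^\vee,A^\times)$ and that $P^\vee$ is the Grothendieck group of $P^\vee_{++}$, so restriction of semigroup homomorphisms into the group $A^\times$ is a bijection. You simply fill in the routine details, including the observation that $\Delta_\Lambda(t)=(-\Lambda)(t)$, whose sign is absorbed by the automorphism $\Lambda\mapsto-\Lambda$ of $P^\vee$ exactly as you say.
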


\subsection{Arcs and loops}
Let $z$ be a formal variable. Let $G[z]$ be the group object in $\kk$-functors defined by $G[z](A) = G(A[z])$ for each $A \in \Alg_\kk$. Similarly we define $G((z^{-1}))$ and $G[[z^{-1}]]$. We have closed embeddings $G[z] \hookrightarrow G((z^{-1}))$ and $G[[z^{-1}]] \hookrightarrow G((z^{-1}))$. Observe that $G[[z^{-1}]]$ is a scheme of infinite type, $G[z]$ is an ind-scheme of ind-finite type, and $G((z^{-1}))$ is an ind-scheme of ind-infinite type.

Let $\Gr_G^{\thick}$ be the thick affine Grassmannian, which is a scheme of infinite type. Let $1 \in
\Gr_G^{\thick}$ be the unit point. We have a transitive $G((z^{-1}))$-action on $\Gr_G^{\thick}$, and the stabilizer of the point $1$ is $G[z]$. Furthermore, it is easy to see that the map
\begin{align}
 G((z^{-1})) \rightarrow \Gr_G^{\thick} 
\end{align}
obtained by acting on $1 \in \Gr_G^{\thick}$ is a Zariski locally trivial principal bundle for the group $G[z]$. For this reason, one often writes $\Gr_G^{\thick} = G((z^{-1}))/G[z]$. However, one needs to be careful because the na\"ive functor indicated by $G((z^{-1}))/G[z]$ is not the functor that represents $\Gr_G^{\thick}$: one needs to appropriately sheafify.

\subsection{Generalized affine Grassmannian slices}

For $\lambda \in P$, we can consider the point $z^\lambda \in G((z^{-1}))$ and the corresponding point $z^\lambda \in \Gr_G^\thick$. We will restrict our attention to $\lambda \in P^{++}$. Then we write $\Gr^\lambda$ for the orbit inside of $\Gr_G^\thick$ of the point $z^\lambda$ under the group $G[z]$. Let $\overline{\Gr^\lambda}$ for the closure of the orbit (with its reduced scheme structure). It is well known that both $\Gr^\lambda$ and $\overline{\Gr^\lambda}$ are finite type schemes, and that $\Gr^\lambda$ is a smooth scheme.

We define the closed subfunctor $\overline{\X^\lambda}$ of $G((z^{-1}))$ to be the preimage of $\overline{\Gr^\lambda}$ under the map $G((z^{-1})) \rightarrow \Gr_G^{\thick}$ is surjective. We define $\X^\lambda$ to be the open subfunctor of $\overline{\X^\lambda}$ that is the primage of $\Gr^\lambda$. Suggestively, we will write $\X^\lambda = G[z] z^\lambda G[z]$ and $\overline{\X^\lambda} = \overline{G[z] z^\lambda G[z]}$. Observe that both $\overline{\X^\lambda}$ and $\X^\lambda$ have $G[z] \times G[z]$ actions given by left and right multiplications.

Let $G_1[[z^{-1}]]$ be the closed subscheme of $G[[z^{-1}]]$ consisting of elements that evaluate to $1 \in G$ modulo $z^{-1}$. Then we have the Gauss decomposition (in $\kk$-functors):
\begin{align}
  \label{eq:1}
  G_1[[z^{-1}]] = U_1^{+}[[z^{-1}]] \cdot T_1[[z^{-1}]] \cdot U_1^{-}[[z^{-1}]]  
\end{align}
where the factors on the right hand side are defined exactly as for $G$. For each $\mu \in P$, we will consider
\begin{align}
  \label{eq:2}
  \W_{\mu} = U_1^{+}[[z^{-1}]] \cdot z^{\mu} T_1[[z^{-1}]] \cdot U_1^{-}[[z^{-1}]]  
\end{align}
which is a closed sub-scheme of $G((z^{-1}))$. Note that this is also a product of $\kk$--functors.  We will restrict attention to $\mu$ with $\mu \leq \lambda$.

Following Braverman, Finkelberg, and Nakajima \cite[Section 2(xi)]{Braverman-Finkelberg-Nakajima}, we define:
\begin{align}
  \label{eq:3}
  \overline{\W}^{\lambda}_\mu = \overline{\X^{\lambda}} \cap \W_\mu
\end{align}
and 
\begin{align}
  \label{eq:4}
 \W^{\lambda}_\mu = \X^{\lambda} \cap \W_\mu
\end{align}
As they explain, $\overline{\W}^{\lambda}_\mu$ is a finite-type affine scheme with a Poisson structure. 
When $\mu$ is also dominant, they show that $\W^{\lambda}_\mu$ is isomorphic to a transversal slice in the affine Grassmannian (under the map $G((z^{-1})) \rightarrow \Gr_G^{\thick}$). For this reason, $\overline{\W^{\lambda}_\mu}$ is called a \terminology{generalized affine Grassmannian slice}. The following is \cite[Lemma 2.5]{Braverman-Finkelberg-Nakajima}; we record the following elementary proof for the benefit of the reader.

\begin{Proposition}
  \label{prop:finite-presentation-of-w-lambda-mu}
  For $\lambda$ and $\mu$ as above, $\overline{\W}^{\lambda}_\mu$ is a finitely presented affine $\kk$-scheme.
\end{Proposition}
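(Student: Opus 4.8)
The plan is to reduce, by base change, to the case $\kk = \ZZ$, where finite presentation follows from finite type by Noetherianity, and then to prove finite type by an explicit boundedness argument using the matrix coefficients $\Delta_\Lambda$. First I would observe that all of the data entering the definition of $\overline{\W}^\lambda_\mu$ --- the group $G$ with its subgroups $T, U^\pm$, the element $z^\mu$, the Weyl modules $V(-\Lambda)$, and the Schubert variety $\overline{\Gr^\lambda}$ --- are defined over $\ZZ$, and that the formation of $\W_\mu$, of $\overline{\X^\lambda}$, and of their intersection commutes with base change. Hence $\overline{\W}^\lambda_\mu$ over $\kk$ is the base change along $\ZZ \to \kk$ of the corresponding $\ZZ$-scheme, and since finite presentation is stable under base change it suffices to treat $\kk = \ZZ$. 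Because $\ZZ$ is Noetherian, finite type and finite presentation coincide for $\ZZ$-schemes, so the whole problem reduces to showing that $\overline{\W}^\lambda_{\mu}$ over $\ZZ$ is an affine scheme of finite type. Affineness is immediate: $\W_\mu$ is a product of the affine schemes $U_1^{\pm}[[z^{-1}]]$ and $z^\mu T_1[[z^{-1}]]$, hence affine, and $\overline{\W}^\lambda_\mu$ is closed in it.

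It remains to show finite type, i.e.\ that the closed immersion $\overline{\W}^\lambda_\mu \hookrightarrow \W_\mu$ factors through a finite-dimensional affine subspace. Writing a point of $\W_\mu$ in Gauss form $g = n_+\, z^\mu t\, n_-$ with $n_\pm \in U_1^{\pm}[[z^{-1}]]$ and $t \in T_1[[z^{-1}]]$, the coordinate ring of $\W_\mu$ is a polynomial ring over $\ZZ$ in the countably many Taylor coefficients (in $z^{-1}$) of the entries of $n_\pm$ and of $t$. The key computation is that $n_-$ fixes the lowest weight vector $v_{-\Lambda}$ and $t$ scales it by $\Delta_\Lambda(t)$, so that $g\, v_{-\Lambda} = z^{-\langle \Lambda, \mu \rangle}\Delta_\Lambda(t)\, n_+ v_{-\Lambda}$; since $n_+ v_{-\Lambda} \in v_{-\Lambda} + z^{-1}V(-\Lambda)[[z^{-1}]]$, each matrix coefficient $\langle \xi, g\, v_{-\Lambda}\rangle$ is an explicit Laurent series in $z^{-1}$ whose coefficients are polynomials in the coordinates, with leading behaviour governed by $z^{-\langle \Lambda, \mu\rangle}$. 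In particular this recovers $\Delta_\Lambda(g) = z^{-\langle \Lambda, \mu \rangle}\Delta_\Lambda(t)$ from Lemma \ref{lem:delta-i-and-torus}.

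Next I would translate the condition cutting out $\overline{\X^\lambda}$, namely $g \cdot 1 \in \overline{\Gr^\lambda}$. Since $\overline{\Gr^\lambda}$ is a finite-type Schubert variety, membership amounts to bounds on the $z$-degrees (equivalently, polynomiality and degree conditions) of the matrix coefficients of $g$ in the fundamental Weyl modules $V(-\omega_i)$; by Lemma \ref{lem:delta-i-and-torus}(b) it is enough to use fundamental weights, so only finitely many representations occur. I would then check that these constraints, combined with the leading behaviour $z^{-\langle \Lambda, \mu\rangle}$ dictated by $\mu$, force all but finitely many of the Taylor coefficients either to vanish or to be expressible polynomially in the remaining finitely many: the degree bounds truncate each series. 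This exhibits $\overline{\W}^\lambda_\mu$ over $\ZZ$ as a closed subscheme of a finite-dimensional affine space, hence of finite type, which completes the argument.

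The main obstacle is exactly this last bookkeeping: extracting from the finite-type-ness of $\overline{\Gr^\lambda}$ a uniform bound that shows only finitely many coordinate functions survive on $\overline{\W}^\lambda_\mu$, in a way that is valid integrally rather than just over a field. A cleaner but less self-contained route would be to show that the orbit map $\W_\mu \to \Gr_G^{\thick}$ is a locally closed immersion, realizing $\W_\mu$ as a transversal slice; then $\overline{\W}^\lambda_\mu = \W_\mu \cap \overline{\Gr^\lambda}$ would be locally closed in the finite-type scheme $\overline{\Gr^\lambda}$, and finite type would be automatic. Either way, the representation-theoretic degree bounds coming from $\overline{\Gr^\lambda}$ are precisely what cut the infinite-dimensional $\W_\mu$ down to something finite.
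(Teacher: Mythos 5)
Your overall strategy matches the paper's: reduce to $\kk=\ZZ$ by base change, use Noetherianity to upgrade finite type to finite presentation, and cut the infinite-dimensional $\W_\mu$ down to finitely many coordinates via pole bounds on matrix coefficients. One imprecision is fixable: you assert that membership in $\overline{\Gr^\lambda}$ \emph{amounts to} degree bounds on matrix coefficients, but only the forward implication is available --- the bounds cut out a possibly larger closed ``moduli'' functor $\overline{\XX^\lambda} \supseteq \overline{\X^\lambda}$. The paper sidesteps this by proving that the larger intersection $\W_\mu \cap \overline{\XX^\lambda}$ is finite type, which suffices since $\overline{\W}^\lambda_\mu$ is closed inside it; you only ever need that one direction, so phrase it accordingly. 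The genuine gap is the step you yourself flag as ``the main obstacle'': you never carry out the truncation argument, and it is not automatic. What the bounds control directly are not the Taylor coefficients of $n_+$, $t$, $n_-$ themselves but \emph{products}, e.g.\ (in the paper's $GL_n$ notation) $b_{ij} = a_j x_{ij}$ with $a_j = t_j \cdots t_n$. The paper's mechanism has three ingredients you are missing: (i) embed $G \hookrightarrow GL_n$ compatibly with $T$ and $U^{\pm}$, reducing to $GL_n$ --- note that for general reductive $G$ the semigroup $P^\vee_{++}$ need not be generated by fundamental weights, so your ``only fundamental representations occur'' step does not make sense as stated (for $GL_n$ one uses the fundamental weights \emph{and} the determinant character); (ii) deduce from the $\Delta_\Lambda$-bounds that each $a_j$ is a polynomial in $z^{-1}$ of explicitly bounded degree with constant term $1$, hence a unit whose inverse has coefficients polynomial in the finitely many coefficients of $a_j$; (iii) conclude $x_{ij} = a_j^{-1} b_{ij}$, so every coefficient of $x_{ij}$ (and similarly of $t_i$ and of the entries of $y$) is polynomial in finitely many bounded quantities. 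Without (i)--(iii), the assertion that ``the degree bounds truncate each series'' is exactly the unproven heart of the proposition.

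Separately, your proposed ``cleaner but less self-contained route'' would fail outright: the map $\W_\mu \rightarrow \Gr^\thick_G$ is \emph{not} a locally closed immersion for non-dominant $\mu$, which is precisely why generalized slices are defined inside $G((z^{-1}))$ rather than inside the Grassmannian. For instance, taking $\lambda = 0$ gives $\overline{\X^0} = G[z]$ and $\overline{\Gr^0} = \{1\}$, so the open Zastava $\overline{\W}^0_\mu = \W^0_\mu$, which has positive dimension for $\mu \neq 0$, maps entirely to the single point $1 \in \Gr^\thick_G$. So there is no transversal-slice realization inside $\overline{\Gr^\lambda}$ to appeal to in general, and the explicit coefficient bookkeeping is unavoidable.
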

\begin{proof}

Observe that $\overline{\W}^{\lambda}_\mu$ is defined over $\ZZ$, so it suffices to show that $\overline{\W}^{\lambda}_\mu$ is finite type affine scheme over $\ZZ$.
  
Embed $G$ as a closed subgroup of $GL_n$ such that $T$ and $U^{\pm}$ are compatible with the standard torus and unipotents in $GL_n$. We see that $\overline{\W}^{\lambda}_\mu$ is a closed subfunctor of a generalized affine Grassmannian slice for $GL_n$. Therefore, we are reduced to considering the case of $G = GL_n$.

Let $\overline{\XX^\lambda}$ be the closed subfunctor of $G((z^{-1}))$ consisting of elements $g$ such that for any $\Lambda \in P^\vee_{++}$, the matrix entries of $g$ acting on $V(-\Lambda)$ have poles in $z$ of order no worse than ${-\langle \lambda, \Lambda \rangle}$. Observe that $\overline{\X^\lambda}$ is a closed subfunctor of $\overline{\XX^\lambda}$. The functor $\overline{\XX^\lambda}$ is the preimage of the ``moduli version'' of $\overline{\Gr^\lambda}$ under the map $G((z^{-1})) \rightarrow \Gr^\thick_G$ (see \cite{Kamnitzer-Muthiah-Weekes} for a detailed discussion about the ``moduli version'' of $\overline{\Gr^\lambda}$). We will show that $\W_\mu \cap \overline{\XX^\lambda}$ is finite type, from which we conclude that $\W_\mu \cap \overline{\X^\lambda}$ is finite type. 

Let $g = x z^\mu t y \in \W_\mu$, i.e. $x \in U_1^{+}[[z^{-1}]]$, $t \in T_1[[z^{-1}]]$, and $y \in  U_1^{-}[[z^{-1}]]$.  Suppose further that $x z^\mu t y \in \overline{\XX^\lambda}$. As we are considering $GL_n$, $t$ is a diagonal matrix with diagonal entries $t_1, \ldots, t_n$, each of which is a series in $z^{-1}$ with constant term $1$,

For each $i$, let $a_i = t_i \cdots t_n$. Computing $\Delta_{\Lambda}(g)$ for $\Lambda$ the fundamental weights and the determinant character, we see that each $a_i$ is in fact a polynomial in $z^{-1}$ with an explicit bound on the degree coming from $\lambda$. Furthermore, the coefficients of each $t_i$ are polynomials in the finitely many coefficients of $a_1,\ldots,a_n$.

For integers $i,j$ with $1 \leq i < j \leq n$, consider $x_{ij}$, the $(i,j)$-th entry of the unipotent upper triangular matrix $x$. Each $x_{ij}$ is a power series in $z^{-1}$ with constant term $0$. If we act by $g$ on the lowest weight vector of the $(n-j+1)$-th fundamental representation, we see that $a_j x_{ij}$ appears as a coefficient. Therefore, $b_{ij}=a_j x_{ij}$ is a polynomial in $z^{-1}$ with an explicit bound on the degree, and the coefficients of $x_{ij}$ are polynomials in the finitely many coefficients of $a_j$ and $b_{ij}$.  A similar consideration applies to the matrix coefficients of $y$, and therefore $\W_\mu \cap \overline{\XX^\lambda}$ is a closed subscheme of a finite-dimensional affine space.

\end{proof}

\section{Main Result}

\subsection{The space $\X^\lambda_\mu$}

Let $\lambda$ and $\mu$ be as above. We will slightly generalize the definition of $\W_\mu$ and define
\begin{align}
  \label{eq:5}
  \X_\mu = U^{+}((z^{-1})) \cdot z^{\mu} T_1[[z^{-1}]] \cdot U^{-}((z^{-1}))  
\end{align}

\begin{Lemma}
  \label{lem:functor-of-X-mu}
  For any $A \in \Alg_\kk$, we have:
  \begin{align}
    \label{eq:11}
   \X_\mu(A) = \{ g \in G((z^{-1})) \suchthat \Delta_{\Lambda}(g) \in z^{-\langle \mu, \Lambda \rangle}\cdot \left(1 + z^{-1}A[[z^{-1}]] \right), \ \ \forall \Lambda \in P^\vee_{++} \}
  \end{align}
\end{Lemma}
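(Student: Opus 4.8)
The plan is to prove the two inclusions separately. Throughout I write $R = A((z^{-1}))$, so that $G((z^{-1}))(A) = G(R)$, and I identify $\X_\mu(A)$ with the set of products $x \cdot z^\mu t \cdot y$ where $x \in U^+((z^{-1}))(A) = U^+(R)$, $t \in T_1[[z^{-1}]](A)$, and $y \in U^-((z^{-1}))(A) = U^-(R)$. For the inclusion $\subseteq$, take such a product $g = x\,(z^\mu t)\,y$. Since $z^\mu t \in T(R)$, this is a factorization of $g$ through the big cell $U^+ T U^- \subseteq G$. Evaluating $\Delta_\Lambda(g) = \langle v^*_{-\Lambda}, g\,v_{-\Lambda}\rangle$, the factor $y$ disappears because $v_{-\Lambda}$ is fixed by $U^-$, and the factor $x$ contributes only its diagonal term because $v^*_{-\Lambda}$ annihilates every weight space above $-\Lambda$; hence $\Delta_\Lambda(g) = \Delta_\Lambda(z^\mu t) = \Delta_\Lambda(z^\mu)\,\Delta_\Lambda(t)$ by Lemma \ref{lem:delta-i-and-torus}. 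Here $\Delta_\Lambda(z^\mu) = z^{-\langle \mu, \Lambda\rangle}$ since $z^\mu$ scales the weight-$(-\Lambda)$ space by $z^{-\langle\mu,\Lambda\rangle}$, while $\Delta_\Lambda(t) \in 1 + z^{-1}A[[z^{-1}]]$ because $t \in T_1[[z^{-1}]](A)$ reduces to $1$ modulo $z^{-1}$. This is exactly the asserted membership.

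For the reverse inclusion, suppose $g \in G(R)$ satisfies the stated condition for every $\Lambda \in P^\vee_{++}$. The factor $1 + z^{-1}A[[z^{-1}]]$ consists of units of $A[[z^{-1}]]$ (unit constant term), and $z^{-\langle\mu,\Lambda\rangle} \in R^\times$, so each $\Delta_\Lambda(g) \in R^\times$. The big cell $U^+ T U^-$, which is isomorphic via the multiplication map to $U^+ \times T \times U^-$, is the common non-vanishing locus inside $G$ of the finitely many functions $\Delta_\Lambda$ attached to the fundamental dominant weights, equivalently a principal open subscheme of $G$. Since all the relevant $\Delta_\Lambda(g)$ are units, $g$ factors through this open subscheme over $R$, yielding a unique decomposition $g = x\,\tau\,y$ with $x \in U^+(R)$, $\tau \in T(R)$, and $y \in U^-(R)$. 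Exactly as in the first paragraph, $\Delta_\Lambda(\tau) = \Delta_\Lambda(g)$ for all $\Lambda$.

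It remains to show that $t := z^{-\mu}\tau$ lies in $T_1[[z^{-1}]](A)$. For every $\Lambda \in P^\vee_{++}$ we compute $\Delta_\Lambda(t) = z^{\langle\mu,\Lambda\rangle}\Delta_\Lambda(\tau) = z^{\langle\mu,\Lambda\rangle}\Delta_\Lambda(g) \in 1 + z^{-1}A[[z^{-1}]]$. By Lemma \ref{lem: torus reconstruction}, $t$ corresponds to the semigroup homomorphism $\Lambda \mapsto \Delta_\Lambda(t)$, which now lands in the subgroup $1 + z^{-1}A[[z^{-1}]]$ of $R^\times$ (this set is closed under inverses). Since $P^\vee$ is the Grothendieck group of $P^\vee_{++}$, this semigroup homomorphism extends to a group homomorphism $P^\vee \to 1 + z^{-1}A[[z^{-1}]]$; via the reconstruction of the torus this exhibits $t$ as an element of $T(A[[z^{-1}]])$ congruent to $1$ modulo $z^{-1}$, that is, $t \in T_1[[z^{-1}]](A)$. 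Then $g = x\,z^\mu t\,y \in \X_\mu(A)$, completing the proof.

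I expect the main obstacle to be the big cell step in the reverse inclusion: justifying over the general coefficient ring $R = A((z^{-1}))$, rather than over a field, that the invertibility of the fundamental $\Delta_\Lambda(g)$ forces $g$ into $U^+ T U^-$. Concretely, the point to nail down is that the big cell really is a principal open subscheme cut out by the non-vanishing of these finitely many functions, so that its $R$-points split uniquely as $U^+(R) \times T(R) \times U^-(R)$. Once this is in place, the representation-theoretic collapse of the unipotent factors and the torus-reconstruction bookkeeping are routine.
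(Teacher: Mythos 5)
Your proposal is correct and follows essentially the same route as the paper's proof: the forward inclusion by direct evaluation of $\Delta_\Lambda$ on the factorization, and the converse by observing that all $\Delta_\Lambda(g)$ are units in $A((z^{-1}))$, hence $g$ factors through the big cell $U^+TU^-$, with the torus factor then pinned down via Lemma \ref{lem: torus reconstruction}. The step you flag as the main obstacle --- that invertibility of the (generating) $\Delta_\Lambda$'s forces an $R$-point into the big cell, which is a principal open subscheme over an arbitrary base --- is exactly the standard fact the paper also invokes without further justification, so your treatment is, if anything, slightly more explicit than the original.
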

\begin{proof}
It is easy to see that any $g\in \X_\mu(A)$ satisfies the conditions on $\Delta_\Lambda(g)$ above.  Conversely, if $g \in G( A((z^{-1})))$ satisfies these conditions, then in particular all $\Delta_\Lambda(g)$ are units in $A((z^{-1}))$.  This implies that $g$ is an $A((z^{-1}))$--point of the big cell $U^+ T U^- \subset G$, so we can write $g = u^+ t u^-$ with $u^\pm \in U^\pm( A((z^{-1})))$ and $t\in T( A((z^{-1})))$. In order to conclude that $g\in \W_\mu$, it remains to show that its factor $t$ has the correct form.  But since $\Delta_\Lambda(g) = \Delta_\Lambda(t)$, this follows from Lemma \ref{lem: torus reconstruction}.
\end{proof}

Consider the quotients $U^{\pm}((z^{-1}))/U^{\pm}_1[[z^{-1}]]$. Unlike the case of $G$, these quotients can be na\"ively interpreted because the natural maps 
\begin{align}
  \label{eq:6}
  U^{\pm}[z] \overset{\sim}{\rightarrow} U^{\pm}((z^{-1}))/U_1^{\pm}[[z^{-1}]] 
\end{align}
are isomorphisms. In particular, $U^{\pm}[z]$ gives us a section of the quotient maps $U^{\pm}((z^{-1})) \rightarrow U^{\pm}((z^{-1}))/U_1^{\pm}[[z^{-1}]]$.

We therefore obtain a map 
\begin{align}
  \label{eq:7}
  \pi_\mu : \X_\mu \rightarrow U^{+}[z] \times U^{-}[z]
\end{align}
given by sending a point $x z^\mu t y \in \X_\mu$ to $(x, y) \in U^{+}((z^{-1}))/U_1^{+}[[z^{-1}]] \times U^{-}((z^{-1}))/U_1^{-}[[z^{-1}]] \cong U^{+}[z] \times U^{-}[z]$.

\begin{Definition}
  \label{def:x-lambda-mu}
  Let $\lambda \in P^{++}$ and $\mu \in P$. Define:
  \begin{align}
    \label{eq:10}
\X^\lambda_\mu = \X^\lambda \cap \X_\mu    
  \end{align}
\end{Definition}

\begin{Proposition}
  \label{prop:x-lambda-mu-product-w-lambda-mu}
  The ind-scheme $\X^\lambda_\mu$ is isomorphic to the product $U^{+}[z] \times \W^\lambda_\mu \times U^{-}[z]$.
\end{Proposition}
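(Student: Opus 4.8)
\medskip

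The plan is to realize the asserted product decomposition through an explicit two-sided multiplication map, proving first the $\lambda$-free statement $\X_\mu \cong U^+[z] \times \W_\mu \times U^-[z]$ and then cutting down to $\X^\lambda$ by exploiting the bi-invariance of $\X^\lambda$ under $G[z]$. Concretely, I would study the morphism of $\kk$-functors
\[
m \colon U^+[z] \times \W_\mu \times U^-[z] \longrightarrow G((z^{-1})), \qquad (x,w,y) \longmapsto x\,w\,y ,
\]
which recovers $\W_\mu$ as the fibre of $\pi_\mu$ over the base point.

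First I would establish that $m$ is an isomorphism onto $\X_\mu$. Alongside the splitting $U^+((z^{-1})) = U^+[z]\cdot U^+_1[[z^{-1}]]$ coming from \eqref{eq:6}, I would record its mirror image $U^-((z^{-1})) = U^-_1[[z^{-1}]]\cdot U^-[z]$ (obtained by applying \eqref{eq:6} to the opposite unipotent, or by inversion); both are unique factorizations realized by isomorphisms of ind-schemes. Now fix $A \in \Alg_\kk$ and $g \in \X_\mu(A)$. By Lemma \ref{lem:functor-of-X-mu} every $\Delta_\Lambda(g)$ is a unit in $A((z^{-1}))$, so $g$ is an $A((z^{-1}))$-point of the big cell and factors uniquely as $g = u^+ \cdot z^\mu t \cdot u^-$ with $u^\pm \in U^\pm(A((z^{-1})))$ and $t \in T_1[[z^{-1}]](A)$. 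Applying the two splittings gives unique $u^+ = x\,v^+$ and $u^- = v^-\,y$ with $x \in U^+[z](A)$, $y \in U^-[z](A)$ and $v^\pm \in U^\pm_1[[z^{-1}]](A)$, whence
\[
g \;=\; x\,\bigl(v^+\, z^\mu t\, v^-\bigr)\,y \;=\; x\,w\,y, \qquad w := v^+\, z^\mu t\, v^- \in \W_\mu(A).
\]
This recipe is natural in $A$ and manifestly inverse to $m$; since the Gauss decomposition and the two unipotent splittings are all isomorphisms of (ind-)schemes, $m$ is an isomorphism $U^+[z] \times \W_\mu \times U^-[z] \xrightarrow{\ \sim\ } \X_\mu$.

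Next I would pass to $\X^\lambda$. Because $\X^\lambda = G[z]\,z^\lambda\,G[z]$ is the preimage of the $G[z]$-orbit $\Gr^\lambda$ under the $G[z]$-equivariant map $G((z^{-1})) \to \Gr_G^{\thick}$, it is stable under left and right multiplication by $G[z]$. As $U^\pm[z] \subseteq G[z]$, for any triple $(x,w,y)$ we have $x\,w\,y \in \X^\lambda$ if and only if $w \in \X^\lambda$. Therefore $m$ restricts to an isomorphism
\[
U^+[z] \times \bigl(\W_\mu \cap \X^\lambda\bigr) \times U^-[z] \;\xrightarrow{\ \sim\ }\; \X_\mu \cap \X^\lambda ,
\]
and recalling $\W^\lambda_\mu = \W_\mu \cap \X^\lambda$ and $\X^\lambda_\mu = \X_\mu \cap \X^\lambda$ yields the proposition.

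I expect the only genuinely delicate points to be functorial rather than conceptual. The main thing to get right is that $m$ and its inverse are morphisms of ind-schemes, not merely bijections on $A$-points; this is exactly what is guaranteed by assembling the scheme-level Gauss decomposition of the big cell with the ind-scheme splittings above, so care is needed only in tracking that each constituent is an isomorphism over $A((z^{-1}))$. A secondary bookkeeping point is the handedness of the two splittings: the polynomial factors $x$ and $y$ must be extracted on the outside (right splitting for $U^+$, left splitting for $U^-$) so that they can be absorbed into the $G[z]$-action used in the final step; with this convention the resulting isomorphism realizes the product structure, even though it need not coincide on the nose with the naive map $\pi_\mu$.
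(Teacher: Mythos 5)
Your proof is correct and follows essentially the same route as the paper: the paper's proof defines the projection $\pi_\mu$ via the splittings \eqref{eq:6} and sends $g \mapsto x^{-1} g y^{-1}$ using the $U^{+}[z] \times U^{-}[z]$-equivariance of $\pi_\mu$ together with the $G[z]$-bi-invariance of $\X^\lambda$, which is precisely the inverse of your multiplication map $m$. Your explicit treatment of the handedness of the $U^-$-splitting (extracting the polynomial factor on the outside, $u^- = v^- y$) makes careful a point the paper leaves implicit in its notation for the quotient $U^{-}((z^{-1}))/U_1^{-}[[z^{-1}]]$, but the underlying argument is the same.
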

\begin{proof}
  The map \eqref{eq:7} is our required map $\X^\lambda_\mu \rightarrow U^{+}[z] \times U^{-}[z]$. We need to construct a map $\X^\lambda_\mu \rightarrow \W^\lambda_\mu$. Observe that $\X^\lambda_\mu$ has a $U^{+}[z] \times U^{-}[z]$-action where the $U^{+}[z]$-factor acts by left multiplication, and the $U^{-}[z]$-factor acts by right multiplication. Furthermore,  the map $\pi_\mu$ is equivariant for this action. Given $g \in \X^\lambda_\mu$, consider $(x,y) = \pi_\mu(g)$. Then $x^{-1} \pi_\mu(g) y^{-1} \in \W^\lambda_\mu$. This defines our map $\X^\lambda_\mu \rightarrow \W^\lambda_\mu$, and it is clear that this and $\pi_\mu$ realize $\X^\lambda_\mu$ as the above product.
\end{proof}

\subsection{Formal smoothness of $\X^\lambda_\mu$}

\begin{Proposition}
  \label{prop:x-lambda-formally-smooth}
The ind-scheme $\X^\lambda$ is formally smooth.
\end{Proposition}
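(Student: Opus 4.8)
The plan is to exhibit $\X^\lambda$ as the total space of a Zariski-locally trivial principal $G[z]$-bundle over the smooth finite-type scheme $\Gr^\lambda$, and then deduce formal smoothness from formal smoothness of the base together with formal smoothness of the structure group $G[z]$. First I would record that the map $\X^\lambda \to \Gr^\lambda$ obtained by restricting $G((z^{-1})) \to \Gr_G^{\thick}$ to the preimage of $\Gr^\lambda$ is a Zariski-locally trivial principal $G[z]$-bundle: this is immediate from the corresponding statement for $G((z^{-1})) \to \Gr_G^{\thick}$ recalled above, since $\X^\lambda$ is \emph{by definition} the preimage of $\Gr^\lambda$.

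Next I would assemble the two formal-smoothness inputs. The base $\Gr^\lambda$ is a smooth finite-type $\kk$-scheme, hence formally smooth by Proposition \ref{prop: smooth and fsmooth}. The structure group $G[z]$ is formally smooth: given a square-zero extension $\tilde{A} \to A$ with kernel $I$, the induced map $\tilde{A}[z] \to A[z]$ is again a square-zero extension (its kernel $I[z]$ satisfies $(I[z])^2 \subseteq I^2[z] = 0$), so the surjectivity of $G[z](\tilde{A}) \to G[z](A)$, i.e. of $G(\tilde{A}[z]) \to G(A[z])$, follows from formal smoothness of the smooth $\kk$-group $G$.

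With these in hand I would verify the lifting property directly. Fix a square-zero extension $\tilde{A} \twoheadrightarrow A$ with kernel $I$ and a point $g \in \X^\lambda(A)$; let $\bar{g} \in \Gr^\lambda(A)$ be its image. Using formal smoothness of $\Gr^\lambda$, lift $\bar{g}$ to $\tilde{\bar{g}} \in \Gr^\lambda(\tilde{A})$. Pulling back the bundle along $\tilde{\bar{g}}$ produces a Zariski-locally trivial $G[z]$-torsor $Q$ over the affine scheme $\Spec \tilde{A}$, and $g$ furnishes a section of $Q$ over the closed subscheme $\Spec A \hookrightarrow \Spec \tilde{A}$. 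Since $\X^\lambda$ is the preimage of $\Gr^\lambda$, any section of $Q$ over $\Spec \tilde{A}$ extending $g$ and mapping to $\tilde{\bar{g}}$ is automatically the desired lift in $\X^\lambda(\tilde{A})$. Thus the entire problem reduces to extending a section of $Q$ across the square-zero thickening $\Spec A \hookrightarrow \Spec \tilde{A}$.

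The main obstacle is exactly this gluing step, and it is where affineness of $\Spec \tilde{A}$ is essential. The thickening is a homeomorphism on underlying spaces, so a trivializing open cover of $\Gr^\lambda$ pulls back to a cover of $\Spec \tilde{A}$, which I refine to a cover by principal opens $\Spec \tilde{A}_{f}$ over which $Q$ is trivial. Over each $\Spec \tilde{A}_{f}$ the extension problem becomes that of lifting a $G[z]$-point along the square-zero extension $\tilde{A}_{f} \to A_{f}$, which succeeds by formal smoothness of $G[z]$. Two such local extensions differ by an element of $\ker\!\big(G[z](\tilde{A}_{fg}) \to G[z](A_{fg})\big)$, and because $I^2 = 0$ this kernel is the group of $\Spec \tilde{A}_{fg}$-sections of the quasi-coherent sheaf on $\Spec \tilde{A}$ associated to the module $\fg \otimes_\kk I[z]$. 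The resulting gluing discrepancy is therefore a \v{C}ech $1$-cocycle valued in a quasi-coherent sheaf on the affine scheme $\Spec \tilde{A}$, hence a coboundary since quasi-coherent cohomology vanishes in positive degrees on an affine; correcting the local sections by this coboundary yields a global section of $Q$ and completes the lift. I expect the only genuinely delicate points to be the identification of this kernel group with the sections of a quasi-coherent sheaf for the infinite-type group $G[z]$, and the verification that no finiteness beyond the quasi-compactness of $\Gr^\lambda$ is required.
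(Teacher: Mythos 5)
Your proof is correct and takes essentially the same route as the paper: realize $\X^\lambda \to \Gr^\lambda$ as a Zariski-locally trivial principal $G[z]$-bundle over the smooth finite-type base, and deduce formal smoothness of $G[z]$ from the fact that $\tilde{A}[z] \to A[z]$ is again a square-zero extension together with formal smoothness of $G$. The \v{C}ech/torsor gluing you spell out---extending the section across the thickening using vanishing of quasi-coherent $H^1$ on the affine $\Spec \tilde{A}$, with kernel sheaf associated to $\fg \otimes_\kk I[z]$ (strictly, its adjoint twist along the torsor, which is still quasi-coherent)---is exactly the standard reduction the paper leaves implicit in its one-line ``so we are reduced to showing $G[z]$ is formally smooth.''
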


\begin{proof}
By definition, we have a Zariski-locally-trivial principal bundle $\X^\lambda  \rightarrow \Gr^\lambda$ for the group $G[z]$ over the smooth base $\Gr^\lambda$. So we are reduced to showing that the ind-scheme $G[z]$ is formally smooth.

 We need to show that for any square-zero extension $\tilde{A} \twoheadrightarrow A$, the map $G[z](\tilde{A}) \rightarrow G[z](A)$. This follows because $\tilde{A}[z] \twoheadrightarrow A[z]$ is also a square-zero extension and $G$ is smooth (and hence formally smooth).
\end{proof}

\begin{Remark}
  \label{rem:X-lambda-not-smooth}
  Although the ind-scheme is $\X^\lambda$ is formally smooth, it is \emph{not} smooth, i.e. it cannot be written as a an increasing union of smooth varieties (not even locally in the analytic topology). Fishel, Grojnowski, and Teleman show that thin affine Grassmannians are not smooth in this sense (even though they are formally smooth) \cite[Theorem 5.4]{Fishel-Grojnowski-Teleman}.  Locally in the Zariski topology, $\X^\lambda$ is isomorphic to a smooth variety times the big cell in the thin affine Grassmannian. Adapting the argument in \emph{loc. cit.}, we can conclude that $\X^\lambda$ is not smooth.

  Nonetheless, we will use the formal smoothness of $\X^\lambda$ to deduce the smoothness of the finitely presented scheme $\W^\lambda_\mu$ below. Because $\X^\lambda$ is not smooth, we cannot na\"ively truncate the argument to a finite-dimensional situation. This is a subtle point of our approach: the use of infinite dimensional ind-schemes and formal smoothness seems to be essential.
\end{Remark}

The following lemma is a slight variation of the classical Weierstra\ss\ Preparation Theorem (see e.g. \cite[Ch.~VII, \S 3.8]{Bourbaki}).

\begin{Lemma}
  \label{lem:weierstrass-preparation-type-statement}
  Let $\tilde{A} \rightarrow A$ be a square-zero extension with kernel $I$. Let $I[z]$ denote the set of polynomials in $\tilde{A}[z]$ having all coefficients lie in $I$. Let $D \in I[z] + 1 + z^{-1} \widetilde{A}[[z^{-1}]] $. Then there exists a unique polynomial $\gamma \in 1+ I[z]$ such that
  \begin{align}
    \label{eq:15}
\gamma D \in 1 + z^{-1} \widetilde{A}[[z^{-1}]]
  \end{align}
\end{Lemma}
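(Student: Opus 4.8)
The plan is to turn the statement into a triangular linear system for the coefficients of $\gamma$ and solve it by descending induction on the degree, using the hypothesis $I^2=0$ at the one essential point.

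First I would decompose $D = 1 + p + q$ with $p \in I[z]$ and $q \in z^{-1}\tilde{A}[[z^{-1}]]$, and search for $\gamma$ in the form $\gamma = 1 + c$ with $c \in I[z]$. Expanding and using that $c$ and $p$ both have coefficients in $I$, so that $cp$ has coefficients in $I^2 = 0$, gives
\[
  \gamma D = 1 + (p + c) + q + cq .
\]
Write $[\,\cdot\,]_{\ge 0}$ for the truncation of a Laurent series in $z$ onto its non-negative powers. Since $[q]_{\ge 0} = 0$ and $p, c$ are polynomials, the condition $\gamma D \in 1 + z^{-1}\tilde{A}[[z^{-1}]]$ is equivalent to the single equation
\begin{equation}
  c + [cq]_{\ge 0} = -p \tag{$\ast$}
\end{equation}
in $I[z]$ (note that $cq$ has coefficients in $I$, since $I$ is an ideal, so all terms of $(\ast)$ genuinely lie in $I[z]$).

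Second I would solve $(\ast)$. The key observation is that $q$ contains only strictly negative powers of $z$, so the $\kk$-linear operator $S \colon c \mapsto -[cq]_{\ge 0}$ strictly lowers degree, $\deg S(c) \le \deg c - 1$. Hence on the space of polynomials in $I[z]$ of degree at most $N := \deg p$ the operator $S$ is nilpotent, so $1 - S$ is invertible there and $(\ast)$—which reads $(1-S)c = -p$—has the unique solution $c = -\sum_{k \ge 0} S^k(p)$, a finite sum lying in $I[z]$. Setting $\gamma = 1 + c$ proves existence. For uniqueness over all of $I[z]$ I would first observe that any solution has degree at most $N$: if its degree were $M > N$, the degree-$M$ coefficient of the left side of $(\ast)$ would equal $c_M \ne 0$ (as $[cq]_{\ge 0}$ has degree $< M$), with nothing to match it on the right. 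Nilpotence of $S$ then gives uniqueness. Equivalently, one can simply read off the coefficients of $c$ by the descending recursion $c_m = -p_m - \sum_{j \ge 1} c_{m+j}\,q_j$, which is well-posed because only finitely many $c_{m+j}$ are nonzero.

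I do not expect a genuine obstacle here: this is the formal (infinitesimal) shadow of Weierstra\ss\ preparation, and the analytic content of the classical theorem is replaced by pure linear algebra. The two structural facts carrying the argument are the hypothesis $I^2 = 0$, which kills the cross term $cp$ and thereby makes the problem \emph{linear} in $c$, and the separation of powers of $z$, which renders $S$ degree-lowering and hence nilpotent. The only places needing a little care are bookkeeping: checking that the solution $c$ really has coefficients in $I$ and finite degree, so that $\gamma \in 1 + I[z]$ as required, and making sure uniqueness is established over the whole of $I[z]$ and not merely over polynomials of bounded degree.
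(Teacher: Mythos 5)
Your proposal is correct and follows essentially the same route as the paper's proof: the same decomposition $D = 1 + p + q$, the same use of $I^2 = 0$ to kill the cross term and linearize the problem, and the same triangular system solved by descending induction on the degree (your recursion $c_m = -p_m - \sum_{j\ge 1} c_{m+j} q_j$ is exactly the paper's upper-triangularity relation, with the Neumann series $c = -\sum_k S^k(p)$ being only a repackaging of that induction). Your explicit degree-bound argument for uniqueness over all of $I[z]$ is a point the paper treats more tersely, but the substance is identical.
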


\begin{proof}

For a series $s \in \widetilde{A}((z^{-1}))$, denote by $\reg (s) \in \widetilde{A}[z]$ its polynomial part.  
Write $D = x + 1 + a$ with $x\in  I[z]$ and $a\in z^{-1} \widetilde{A}[[z^{-1}]]$, and  consider a general element $\gamma = 1 + y \in 1 + I[z]$.  Then $xy =0$ since $I[z]^2 = 0$, and we have
\begin{equation}
\gamma D = x+1 +y +ya + a
\end{equation}
Therefore $\reg (\gamma D) =x + 1 + y +\reg( ya)$.  Since $a \in z^{-1}\widetilde{A}[[z^{-1}]]$, the coefficients of $y+ \reg(ya)$ have an upper-triangularity property with respect to the coefficients of $y$: for $n\geq 0$ the coefficient of $z^n$ in $y+\reg(ya)$ equals $y_n + \sum_{k > 0} a_{-k } y_{n+k}$, where $y_\ell, a_{\ell}$ denote the coefficients of $z^\ell$ in $y$ and $a$, respectively. Thus, by starting with the leading degree coefficient of $y$ and working downwards inductively, we can solve uniquely for $y$ such that $\reg(\gamma D) = 1$ (observe in particular that the degree of $y$ must be less than or equal to the degree of $x$).  
This proves the claim.
\end{proof}

\begin{Theorem}
  \label{thm:main-theorem}
 The ind-scheme $\X^\lambda_\mu$ is formally smooth.
\end{Theorem}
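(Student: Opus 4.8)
The plan is to verify formal smoothness directly from the definition: given a square-zero extension $\tilde A \twoheadrightarrow A$ with kernel $I$ and a point $g \in \X^\lambda_\mu(A)$, I must produce a lift $\tilde g \in \X^\lambda_\mu(\tilde A)$. Since $\X^\lambda_\mu = \X^\lambda \cap \X_\mu$, the strategy is to first lift $g$ into $\X^\lambda$ using the formal smoothness already established in Proposition \ref{prop:x-lambda-formally-smooth}, and then to correct this lift by left-multiplication by a suitable torus element so that it additionally satisfies the defining conditions of $\X_\mu$ from Lemma \ref{lem:functor-of-X-mu}. The entire correction will take place ``in the direction of $I$'', so that the corrected element still reduces to $g$.

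In detail, first I would choose any lift $\tilde g_0 \in \X^\lambda(\tilde A)$ of $g$, which exists by Proposition \ref{prop:x-lambda-formally-smooth}. For each $\Lambda \in P^\vee_{++}$ the reduction of $\Delta_\Lambda(\tilde g_0)$ modulo $I$ is $\Delta_\Lambda(g)$, which is a unit in $A((z^{-1}))$; since $I$ is nilpotent, $\Delta_\Lambda(\tilde g_0)$ is then a unit in $\tilde A((z^{-1}))$, so (as in the proof of Lemma \ref{lem:functor-of-X-mu}) $\tilde g_0$ lies in the big cell over $\tilde A((z^{-1}))$. By Lemma \ref{lem:functor-of-X-mu}, membership in $\X_\mu$ is detected entirely by the $\Delta_\Lambda$. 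Setting $D_\Lambda := z^{\langle \mu, \Lambda\rangle}\Delta_\Lambda(\tilde g_0)$, the hypothesis $g \in \X_\mu(A)$ says exactly that the reduction of $D_\Lambda$ modulo $I$ lies in $1 + z^{-1}A[[z^{-1}]]$; since the polynomial part of $D_\Lambda$ is finite, this forces $D_\Lambda \in 1 + I[z] + z^{-1}\tilde A[[z^{-1}]]$, which is precisely the hypothesis of Lemma \ref{lem:weierstrass-preparation-type-statement}. Applying that lemma yields a unique $\gamma_\Lambda \in 1 + I[z]$ with $\gamma_\Lambda D_\Lambda \in 1 + z^{-1}\tilde A[[z^{-1}]]$.

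It remains to package the $\gamma_\Lambda$ into a single torus element and check everything fits. Using the multiplicativity $\Delta_{\Lambda + \Lambda'} = \Delta_\Lambda \Delta_{\Lambda'}$ of Lemma \ref{lem:delta-i-and-torus}(b) one gets $D_{\Lambda+\Lambda'} = D_\Lambda D_{\Lambda'}$, and then $\gamma_\Lambda \gamma_{\Lambda'}$ satisfies the defining property of $\gamma_{\Lambda+\Lambda'}$ (note $\gamma_\Lambda \gamma_{\Lambda'} \in 1 + I[z]$ because $I[z]^2 = 0$); by the uniqueness clause of Lemma \ref{lem:weierstrass-preparation-type-statement} this gives $\gamma_{\Lambda + \Lambda'} = \gamma_\Lambda\gamma_{\Lambda'}$. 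Thus $\Lambda \mapsto \gamma_\Lambda$ is a semigroup homomorphism $P^\vee_{++} \to (\tilde A[z])^\times$, which by Lemma \ref{lem: torus reconstruction} is the datum of a point $\tau \in T[z](\tilde A) \subseteq G[z](\tilde A)$ with $\Delta_\Lambda(\tau) = \gamma_\Lambda$; since each $\gamma_\Lambda \equiv 1 \bmod I$, naturality of that bijection shows $\tau$ reduces to the identity. I would then set $\tilde g := \tau \tilde g_0$. Left-multiplication by $\tau \in G[z]$ preserves $\X^\lambda$, so $\tilde g \in \X^\lambda(\tilde A)$; and by Lemma \ref{lem:delta-i-and-torus}(a), $\Delta_\Lambda(\tilde g) = \gamma_\Lambda \Delta_\Lambda(\tilde g_0) = z^{-\langle\mu,\Lambda\rangle}\gamma_\Lambda D_\Lambda \in z^{-\langle\mu,\Lambda\rangle}(1 + z^{-1}\tilde A[[z^{-1}]])$, so $\tilde g \in \X_\mu(\tilde A)$ by Lemma \ref{lem:functor-of-X-mu}. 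Hence $\tilde g \in \X^\lambda_\mu(\tilde A)$, and since $\tau \equiv 1 \bmod I$ it reduces to $g$, completing the lift. I expect the main obstacle to be the middle step: recognizing that the discrepancy between the naive lift $\tilde g_0$ and $\X_\mu$ is concentrated in $I$ in exactly the form demanded by the Weierstra\ss-type Lemma \ref{lem:weierstrass-preparation-type-statement}, and that the resulting corrections $\gamma_\Lambda$ are multiplicative enough to descend to a genuine torus point via Lemma \ref{lem: torus reconstruction}; the formal smoothness input and the final verification are comparatively routine.
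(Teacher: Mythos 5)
Your proposal is correct and follows essentially the same route as the paper's own proof: lift into $\X^\lambda$ via Proposition \ref{prop:x-lambda-formally-smooth}, apply the Weierstra\ss-type Lemma \ref{lem:weierstrass-preparation-type-statement} to each $\Delta_\Lambda$, use uniqueness and Lemma \ref{lem:delta-i-and-torus}(b) to get multiplicativity of the $\gamma_\Lambda$, assemble them into a torus point via Lemma \ref{lem: torus reconstruction}, and correct the lift by left multiplication. If anything, you are slightly more careful than the paper in two spots --- justifying why $D_\Lambda \in 1 + I[z] + z^{-1}\tilde A[[z^{-1}]]$ and why the correcting torus element reduces to the identity so that $\tilde g$ still lifts $g$ --- both of which the paper leaves implicit.
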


\begin{proof}
  Let $\tilde{A} \twoheadrightarrow A$ be a square-zero extension with kernel $I$, and let $g \in \X^\lambda_\mu(A) = \X^\lambda(A) \cap \X_\mu(A)$. Because $\X^\lambda$ is formally smooth, we can find $g' \in \X^\lambda(\tilde{A})$ lifting $g$. Let $\Lambda \in P^\vee_{++}$.  It may not be the case that $\Delta_\Lambda(g')$ lies in $z^{-\langle \mu, \Lambda \rangle}\cdot \left(1 + z^{-1}\tilde{A}[[z^{-1}]] \right)$, but we know at least that $\Delta_\Lambda(\tilde{g}) \in z^{-\langle \mu, \Lambda\rangle}\cdot \left(I[z] + 1 + z^{-1}\tilde{A}[[z^{-1}]] \right)$. Using Lemma \ref{lem:weierstrass-preparation-type-statement}, we can find a unique $\gamma_\Lambda \in 1 + I[z]$ so that
  \begin{align}
    \label{eq:12} \gamma_\Lambda \Delta_\Lambda(g') \in z^{-\langle \mu, \Lambda \rangle}\cdot \left(1 + z^{-1}\tilde{A}[[z^{-1}]] \right)
  \end{align}
  Since $\Delta_{\Lambda}(g) \Delta_{\Lambda'}(g) = \Delta_{\Lambda+\Lambda'}(g)$ by Lemma \ref{lem:delta-i-and-torus}(b), we must have $\gamma_{\Lambda+\Lambda'} = \gamma_{\Lambda} \gamma_{\Lambda'}$ by uniqueness. Also note that $1 + I[z] \subset \widetilde{A}[z]^\times$ since $I[z]^2 = 0$.  Therefore the map $\Lambda \mapsto \gamma_\Lambda$ defines an element of $\operatorname{Hom}_{\textrm{semigroups}}(P^\vee_{++}, \widetilde{A}[z]^\times)$, and thus a point $t \in T(\widetilde{A}[z])$ by Lemma \ref{lem: torus reconstruction}.   
  
  Define $\tilde{g} = t \cdot g'$. Because $t \in T(\tilde{A}[z])$ and $\X^\lambda$ is invariant under $G[z]$-multiplication, $\tilde{g} \in \X^\lambda(\tilde{A})$. By Lemma \ref{lem:delta-i-and-torus}(a) and Lemma \ref{lem: torus reconstruction}, for each $\Lambda \in P^\vee_{++}$, we have:
  \begin{align}
    \label{eq:14}
\Delta_\Lambda(\tilde{g}) = \Delta_\Lambda(t) \Delta_\Lambda(g') = \gamma_\Lambda \Delta_\Lambda(g')\in z^{-\langle \mu, \Lambda \rangle}\cdot \left(1 + z^{-1}\tilde{A}[[z^{-1}]] \right)
  \end{align}
Therefore $\tilde{g} \in \X^\lambda_\mu(\tilde{A})$ by Lemma \ref{lem:functor-of-X-mu}. Since $\tilde{g}$ is a lift of $g$, this shows that $\X^\lambda_\mu$ is formally smooth.
\end{proof}

Now, using Proposition \ref{prop:x-lambda-mu-product-w-lambda-mu}, Lemma \ref{lem:formal-smoothness-and-products}, and observing that $U^{+}[z] \times U^{-}[z]$ has a $\kk$-point, we can conclude that $\W^\lambda_\mu$ is formally smooth. Because $\W^\lambda_\mu$ is finitely-presented, smoothness and formal smoothness coincide by Proposition \ref{prop: smooth and fsmooth}. We conclude the following.
\begin{Corollary}
  \label{cor:main-result}
  The scheme $\W^\lambda_\mu$ is smooth.
\end{Corollary}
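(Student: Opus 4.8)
The plan is to treat this corollary as the harvesting step of the machinery built up in the preceding results: the formal smoothness of the large ambient ind-scheme $\X^\lambda_\mu$ has already been secured in Theorem \ref{thm:main-theorem}, so what remains is to transfer that property to the factor $\W^\lambda_\mu$ and then upgrade formal smoothness to honest smoothness. All three required inputs — formal smoothness of the product, the product decomposition, and finite presentation — are in hand, so the argument is essentially a matter of assembling them in the right order.

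First I would invoke Proposition \ref{prop:x-lambda-mu-product-w-lambda-mu}, which gives the isomorphism $\X^\lambda_\mu \cong U^{+}[z] \times \W^\lambda_\mu \times U^{-}[z]$, and Theorem \ref{thm:main-theorem}, which says this ind-scheme is formally smooth. Since products of $\kk$-functors are symmetric, I would regroup the decomposition as $(U^{+}[z] \times U^{-}[z]) \times \W^\lambda_\mu$ and apply Lemma \ref{lem:formal-smoothness-and-products} with $X = U^{+}[z] \times U^{-}[z]$ and $Y = \W^\lambda_\mu$. The hypothesis $X(\kk) \neq \emptyset$ is immediate: $U^{+}[z]$ and $U^{-}[z]$ are group functors, so each carries its identity $\kk$-point, whence $(1,1) \in X(\kk)$. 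The lemma then yields that $\W^\lambda_\mu$ is formally smooth.

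Next I would pass from formal smoothness to smoothness using Grothendieck's criterion (Proposition \ref{prop: smooth and fsmooth}), for which I must know that $\W^\lambda_\mu$ is finitely presented. Proposition \ref{prop:finite-presentation-of-w-lambda-mu} establishes this for the closure $\overline{\W}^\lambda_\mu = \overline{\X^\lambda} \cap \W_\mu$; since $\W^\lambda_\mu = \X^\lambda \cap \W_\mu$ is by construction the open subscheme of $\overline{\W}^\lambda_\mu$ cut out as the preimage of the open orbit $\Gr^\lambda \subseteq \overline{\Gr^\lambda}$, it is itself a finitely presented affine $\kk$-scheme. Proposition \ref{prop: smooth and fsmooth} then converts its formal smoothness into smoothness, which is the assertion of the corollary.

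The genuinely hard step lies upstream, in Theorem \ref{thm:main-theorem} rather than in the corollary: there one lifts a point across a square-zero extension using formal smoothness of $\X^\lambda$ and must then correct the lift back into the $\X_\mu$-condition via the Weierstrass-type Lemma \ref{lem:weierstrass-preparation-type-statement}, precisely because $\X^\lambda$ is formally smooth but \emph{not} smooth (Remark \ref{rem:X-lambda-not-smooth}) and so cannot be truncated to a finite-dimensional model. At the level of the corollary itself, the only point demanding genuine attention is the one flagged above — verifying that it is $\W^\lambda_\mu$, and not merely its closure $\overline{\W}^\lambda_\mu$, that inherits finite presentation — and this is settled by its being an open subscheme.
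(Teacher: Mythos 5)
Your proposal is correct and follows essentially the same route as the paper, which likewise combines Proposition \ref{prop:x-lambda-mu-product-w-lambda-mu}, Theorem \ref{thm:main-theorem}, and Lemma \ref{lem:formal-smoothness-and-products} (using the identity $\kk$-point of $U^{+}[z]\times U^{-}[z]$) to conclude formal smoothness of $\W^\lambda_\mu$, and then applies Proposition \ref{prop: smooth and fsmooth} via finite presentation. Your explicit justification that finite presentation passes from $\overline{\W}^\lambda_\mu$ to its open subscheme $\W^\lambda_\mu$ is precisely the step the paper compresses into the phrase ``(and hence $\W^\lambda_\mu$)'' after Proposition \ref{prop:finite-presentation-of-w-lambda-mu}.
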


\subsection{Concluding remarks}

\subsubsection{Comparison to smoothness for open affine Grassmannian slices}
\label{sec:comparison-to-smoothness-for-open-affine-grassmannian-slices}

Our proof is inspired by the usual approach to showing smoothness of open affine Grassmannian slices. We will  quickly review this.  Let $\lambda$ and $\mu$ be dominant weights with $\mu \leq \lambda$. In this case, $\overline{\W}^\lambda_\mu$ is a closed subscheme of the affine Grassmannian with open subscheme $\W^\lambda_\mu$. Let $\cA_0$ denote the ``big cell'' of the affine Grassmannian. Left multiplying $\cA_0$ by $z^\mu$, we get an open subset $\cA_\mu$ of the affine Grassmannian that contains the point $z^\mu$.

Consider the intersection $\cA_\mu \cap \Gr^\lambda$, which is a smooth variety because it is an open subset of the smooth variety $\Gr^\lambda$. Furthermore, there is a map $\cA_\mu \cap \Gr^\lambda \rightarrow V$, where $V$ is the stabilizer inside of $U^+_1[z^{-1}]$ of the point $z^\mu$. Observe that $V$ is isomorphic to a finite dimensional affine space. This map realizes $\cA_\mu \cap \Gr^\lambda$ as $V \times \W^\lambda_\mu$. Therefore, we conclude that $\W^\lambda_\mu$ is smooth. We mention that this is a general calculation that works for arbitrary Schubert slices (see e.g.\cite[\S 1.4]{Kazhdan-Lusztig})).

For us, the space $\X_\mu$ plays the role of $\cA_\mu$, and $\X^\lambda$ plays the role of $\Gr^\lambda$. However, because $\X^\lambda$ is infinite-dimensional, smoothness is more subtle: hence our approach through formal smoothness.

\subsubsection{Open Zastava}
\label{sec:open-zastava}

In the case $\lambda=0$, we have $\overline{\W}^0_\mu = \W^0_\mu$, and the space $\W^0_\mu$ has been considered previously: it is precisely the ``open Zastava'' space consisting of degree $-\mu$ based maps from $\PP^1$ to the flag variety $\cB$ of $G$. In this case, smoothness was previously known by work of Finkelberg and Mirkovi\'c \cite{Finkelberg-Mirkovic}. Let $\phi : \PP^1 \rightarrow \cB$ be a degree $-\mu$ based map. They argue that $\phi$ is a smooth point of the open Zastava if and only if we have $H^1( \PP^1, \phi^{*} \cT_\cB) = 0$, where $\cT_\cB$ is the tangent sheaf of $\cT_\cB$. Because $\cT_\cB$ is globally generated, and all globally generated vector bundles on $\PP^1$ have vanishing higher cohomology, they deduce the necessary vanishing.

Our work gives another proof of the smoothness of open Zastava space. It would be very interesting to understand precisely how the two calculations correspond.

\bibliographystyle{amsalpha}
\bibliography{references}

\end{document}